\newtheorem*{theorem*}{Theorem}
\newtheorem{proposition}{Proposition}
\newtheorem{conjecture}{Conjecture}
\theoremstyle{remark}
\theoremstyle{definition}
\newcommand{\E}{\mathcal{E}}
\title{Fregier ellipses}
\author{Dominique Laurain}
\date{June, 2022}
\begin{document}

\maketitle

\begin{abstract}
We introduce Fregier ellipses as generalization of the Fregier point and exhibit the amazing angles and areas invariants.
\vskip .3cm
\noindent\textbf{Keywords} ellipse, Fregier point, Poncelet closure theorem.
\vskip .3cm
\noindent \textbf{MSC} {51M04
\and 51N20 \and 51N35\and 68T20}
\end{abstract}

\section{Introduction}
Given an ellipse $\E$, the Frégier theorem states that given a point $M$ on the ellipse, all the $M$-rectangular triangles $MNL$ inscribed in the ellipse, have edges $NL$ intersecting at a single point, the ``M-Frégier point``. This paper is about a more general setup where internal angle at $M$ vertex is $\theta$ between $0$ and $\pi$ and we prove some properties observed by Dan Reznik and displayed in videos \cite{reznik2021_fregierI,reznik2021_fregierIII} about envelope $\E'$ of $MN$ chords.

\begin{figure}[H]
    \centering
    \includegraphics[width=.5\textwidth]{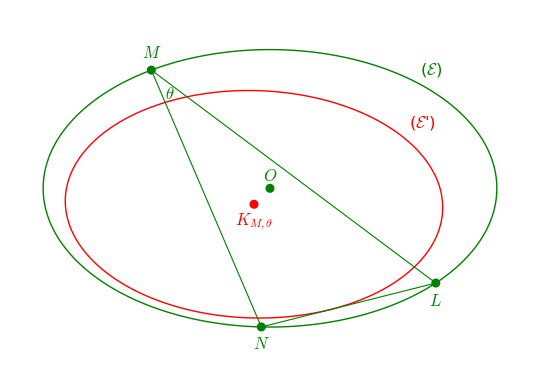}
    \caption{A Frégier ellipse (red color) defined by a point M and an angle $\theta$ }
    \label{fig:fregier_ellipse}
\end{figure}

It should be noticed that the reverse problem, given an angle $\theta$ and a chord $NL$ of the ellipse $\E$ find $M$, has at most two solutions. Get them using cyclic quadrilateral opposite angles characterization : set a triangle $NPL$ on $NL$ with angle $\pi - \theta$ at $P$, and intersect $NPL$ circumcircle with the ellipse.

\subsection*{Main results}  the envelope of $NL$ edges is an ellipse $\E' = \E'_{M,\theta}$. The ellipse is degenerated to a point (M-Frégier point) for $\theta = \frac{\pi}{2}$.
The locus of $\E'$ center is a $\E$-concentric ellipse and, amazingly, $\E'$ has constant area with varying axes lengths, when $M$ is moving around $\E$.
Complementary ``optic`` property : angle of the two tangents from $M$ to $\E'$ as constant $\pi - 2\theta$ value. For special value,  $\theta = \pi/3$ or $\theta = 2\pi/3$, $MNL$ is circumscribing $\E'$.

\subsection*{Subsidiary result}  when $\E$ is a circle then $\E'$ is a concentric circle (a ``Fregier circle``), and given a Poncelet circle-ellipse configuration, the sum of areas of $P_i$-Fregier circles is the same for every $P_1 \dots P_n$ orbit. We prove it for $n = 3$ and conjecture it for $n > 3$.

\section{Computing the envelope}
\label{sec:envelope}
Algebraic computations in proofs are done using trilinear coordinates with $ABC$ reference triangle being the isosceles billiard 3-orbit with $A$ on ellipse minor axis.
If $a$ and $b$ are semi-axis lengths then $AB = AC = \frac{2s}{3 + h}$ and $BC = \frac{2(1 + h)s}{3 + h}$ where $s$ is $ABC$ semi-perimeter and $h$ is positive root of $(\frac{a}{b})^2 = \frac{(1 + h)(3 - h)}{(1 - h)(3 + h)}$.

\begin{proposition}
Given a point $M$ on an ellipse $\E$, the envelope of chords $NL$ producing a constant angle $\theta$ for inscribed triangles $MNL$ is included in an ellipse $\E'$.
\end{proposition}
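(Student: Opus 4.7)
The plan is to compute the envelope of the one-parameter family of chords $\{NL\}$ directly and show that it is cut out by a quadratic equation. First, I would parametrize the chord pair by a single angular parameter $\varphi$: fix $M$ on $\E$, let $\ell_1$ be the line through $M$ making angle $\varphi$ with a reference direction, and let $\ell_2$ be the rotation of $\ell_1$ about $M$ by the fixed angle $\theta$. Substituting $\ell_1$ into the equation of $\E$ and dividing out the known root at $M$ produces the second intersection $N=N(\varphi)$ as a rational function of $t=\tan\varphi$; similarly $L=L(\varphi)$ is rational in $\tan(\varphi+\theta)$. The line $NL$ is then cut out by an equation of the form $F(P,t)=A(t)\,u+B(t)\,v+C(t)\,w=0$ in the trilinear coordinates $P=[u:v:w]$ fixed in the preceding paragraph, where $A,B,C$ are polynomials in $t$ of small degree.

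Next, I would compute the envelope by solving the system $F=0$, $\partial_t F=0$ for the coordinates of the characteristic point of each chord, and then eliminating $t$. The decisive step is to check that the resulting implicit equation collapses---after use of the Pythagorean identity $\sin^2\varphi+\cos^2\varphi=1$ together with the defining equation of $\E$ evaluated at $M$---to a single polynomial of total degree at most $2$ in the coordinates. This exhibits the conic $\E'$ containing the envelope.

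To finish, I would confirm that $\E'$ is of elliptic type. Since the chord family lies entirely in the compact region bounded by $\E$, the envelope is bounded, which forces $\E'$ to be an ellipse (possibly degenerate to a single point, as happens when $\theta=\pi/2$ and $\E'$ collapses to the classical Fr\'egier point). As a conceptual cross-check, the map $N\mapsto L$ is a projective self-map of $\E$---rotation by $\theta$ about $M$ is a projectivity of the pencil of lines through $M$, and this pencil parametrizes $\E$ via the second-intersection map---so the classical theorem that the chords of a projectivity of a conic envelope a conic already guarantees, independent of any computation, that the envelope is contained in some conic.

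The hardest step is the algebraic cancellation in the elimination: a naive bound on the degree of the implicit envelope equation is much greater than $2$, and proving that all higher-degree terms vanish is a non-obvious identity that depends essentially on $M$ lying on $\E$. The choice of trilinear coordinates aligned with the reference billiard triangle is crucial here, since it keeps the intermediate expressions short enough for the cancellation to be recognised and executed explicitly.
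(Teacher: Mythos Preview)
Your proposal is correct and takes a genuinely different route from the paper. The paper proceeds by first \emph{assuming} the envelope is a conic: it constructs five particular chords $NL$ via an auxiliary figure (isosceles triangles erected on the sides of the reference billiard triangle $ABC$, with $MN$ and $ML$ drawn as parallels to their legs), fits the unique dual conic through these five tangents, and then \emph{verifies} with a CAS that a generic tangent $1:m:n$ to this conic meets $\E$ in two points $M_\pm$ for which $\cos^2(\widehat{M_+MM_-})=\cos^2\theta$. Your plan instead attacks the envelope head-on by the standard $F=0$, $\partial_t F=0$ elimination, and---more decisively---supplies a synthetic proof: rotation by $\theta$ about $M$ acts on the pencil through $M$ by the M\"obius map $t\mapsto(t+\tan\theta)/(1-t\tan\theta)$, the second-intersection map identifies that pencil projectively with $\E$, so $N\mapsto L$ is a projectivity of $\E$, and the chords joining corresponding points of a conic projectivity envelope a conic. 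That classical argument is computation-free and already settles the proposition outright, making the elimination step you flag as ``hardest'' unnecessary for the bare statement. The trade-off is that the paper's CAS-driven approach, while less elegant, outputs explicit trilinear coefficients for $\E'$ that are immediately reused in the subsequent propositions on the centre locus and the area invariant; your route would have to recover those separately.
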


\begin{proof}
Assuming the envelope is a conic, five tangents are computed as repetition of construction given by
 \cref{fig:computations} : select one side (picture : $AC$) of the reference triangle $ABC$ and erect an isosceles triangle (picture : $AEC$), then draw chords $MN$ and $ML$ as parallel segments from $M$ with lines $AE$ and $AC$.

 Solving system of five equations for tangents provides an equation, and by duality, equation of $\E'$, envelope for the tangents, is obtained.
 
\begin{figure}
    \centering
    \includegraphics[width=\textwidth]{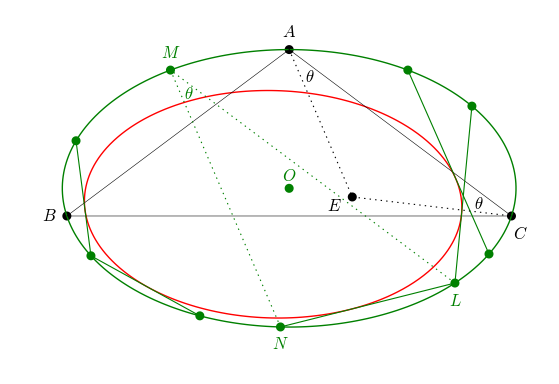}
    \caption{Geometric construction of five tangents of the envelope}
    \label{fig:computations}
\end{figure}

Given real parameter $u$ we set $M = u + 1 : u ( u + 1) : -u$.

Assuming one generic tangent line  $1 : m : n$ is intersecting ellipse $\E$ at two points $M_\pm = -2 m n : (m - n + \pm \sqrt{(m - n)^2 -2(m  + n) + 1} ) n :  -(m - n + 1 \pm \sqrt{(m - n)^2 -2(m  + n) + 1}) m$.

Proof of proposition is done using CAS software : we compute algebraic value of difference between squared cosine of angle $M_1MM_2$ (using cosines law and $M,M_1,M_2$ coordinates) and $\cos^2{\theta}$. Equation in $u$ is deduced when that difference is zero.
Equation is identical to the one got by duality, proving the proposition.
\end{proof}

In the general case, tangent points $T_1$ and $T_2$ are defining the polar line of $M$ with respect to $\E'$. Angle $T_1MT_2$ doesn't depend from $M$ as given in the following proposition.

\begin{proposition}
For $T_1T_2$ the $\E'$-polar line of $M$ , angle $T_1MT_2$ is constant with value $\pi - 2\theta$.
\end{proposition}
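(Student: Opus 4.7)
The plan is to identify the two tangents from $M$ to the envelope $\E'$ as the limiting chords $NL$ obtained when the inscribed triangle $MNL$ degenerates with $N=M$ or $L=M$. By Proposition 1, every chord $NL$ is tangent to $\E'$; and since $\E'$ is a conic there are at most two tangent lines to $\E'$ through $M$. A chord $NL$ coming from a non-degenerate triangle cannot pass through $M$, since the angular constraint $\angle NML=\theta\in(0,\pi)$ prevents $M, N, L$ from being collinear. Hence the two tangents from $M$ to $\E'$ must arise as the two limits of $NL$ in which a vertex of the triangle collides with $M$.

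I would first analyze the limit $N\to M$ along $\E$: the ray $MN$ tends to the tangent line to $\E$ at $M$, and the angular constraint forces the ray $ML$ to make angle $\theta$ with this tangent. The limit of the chord $NL$ is therefore the line through $M$ in the $ML$ direction, that is, a tangent from $M$ to $\E'$ making angle $\theta$ with the tangent to $\E$ at $M$. The symmetric limit $L\to M$ (where $L$ approaches $M$ from the opposite arc of $\E$) produces the other tangent from $M$, also at angle $\theta$ from the tangent to $\E$ at $M$, but on the opposite side.

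The two tangents from $M$ to $\E'$ are thus symmetric about the tangent to $\E$ at $M$, each forming an angle $\theta$ with it. Placing the tangent to $\E$ at $M$ along the $x$-axis and directing both tangent lines into the interior of $\E$, their direction vectors are $(\cos\theta,\sin\theta)$ and $(-\cos\theta,\sin\theta)$; the dot product $\sin^2\theta-\cos^2\theta=\cos(\pi-2\theta)$ gives an angle of $\pi-2\theta$ between them. Since $T_1$ and $T_2$ lie on these two tangent lines, $\angle T_1MT_2=\pi-2\theta$, independently of $M$.

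I expect the main obstacle to be justifying the limit step rigorously: that as $N\to M$ the chords $NL$ converge, as lines, to the line through $M$ in the $ML$ direction, and that this limit line remains tangent to $\E'$. Both statements follow from the continuity of the envelope construction and the closedness of the set of tangent lines to a smooth conic, so I would treat this as a short continuity remark rather than a computation.
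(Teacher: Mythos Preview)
Your argument is correct and is genuinely different from the paper's. The paper's proof is a direct CAS verification: it writes down explicit trilinear coordinates for the two tangent lines from $M$ to $\E'$ (as functions of the parameters $u,h,\theta$), finds $T_1,T_2$, and then checks via the law of cosines that $\cos^2(\angle T_1MT_2)=\cos^2(\pi-2\theta)$. Your approach is synthetic: you identify the two tangents from $M$ to $\E'$ as the limits of the chord $NL$ when $N\to M$ (respectively $L\to M$), note that in either limit the surviving chord makes angle $\theta$ with the tangent to $\E$ at $M$, and read off the included angle. This is cleaner and explains \emph{why} the value $\pi-2\theta$ appears, whereas the paper's computation certifies the result and, as a by-product, records explicit formulas for the tangent lines that may be reused elsewhere.

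Two small points worth tightening. First, your phrase ``symmetric about the tangent to $\E$ at $M$'' should read ``symmetric about the normal to $\E$ at $M$'': the rays you write down, $(\cos\theta,\sin\theta)$ and $(-\cos\theta,\sin\theta)$, are reflections across the $y$-axis, not the $x$-axis. The computation that follows is unaffected. Second, the continuity step deserves one explicit sentence: the one-parameter family of chords $NL$ depends continuously (in the space of lines) on the position of $N$, each member is tangent to the smooth conic $\E'$, and the dual conic is closed, so the limiting line is tangent to $\E'$ as well; since it passes through $M$ and there are at most two tangents from an exterior point, the two limits you describe exhaust the tangents from $M$. With those clarifications your proof is complete.
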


\begin{proof}
Line $1 : m :n$ is going through $M$ when $n = \frac{(m u + 1)(u + 1)}{u}$.

The two tangent lines to $\E'$ through $M$ have trilinear coordinates : $(k_1 \pm k_2 u w)u : k_3 \pm k_2 w : (k_4 \pm k_2 (u + 1) w) (u + 1)$ with
$$k_1 = (h^2 u - h u^2 + h^2 + 2 h u + u^2 + 2h + 1)(u + 2)$$
$$k_2 = h u + u^2 + h + u + 1$$
$$k_3 = 2 h u^3 - h^2 u + 3 h u^2 - 2 u^3 - h^2 - 3 u^2 - 2h - 3u - 1$$
$$k_4 = h^2 u^2 + h u^3 + 2h^2 u + 3h u^2 - u^3 + h^2 + 6h u + 2h + 1$$
$$w = (\sqrt{3 + h}) (\sqrt{1 - h}) \cos{\theta}$$

We deduce trilinear coordinates of $T_1$ and $T_2$ by intersecting the two tangent lines with $\E$ and the result in the proposition follows from computation of $\cos^2{ (T_1MT_2)}$ by cosines law. 

\end{proof}

\section{Locus of center and area}
\label{sec:locus}

Refer to \cref{fig:locus} for yellow colored locus center $K$ (depending on $M$ and $\theta$) of ellipse $\E'$.

\begin{proposition}
For a varying $M$ point the locus of $K$ center of $\E'$ is an ellipse concentric with ellipse $\E$.
\end{proposition}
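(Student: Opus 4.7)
The plan is to read off the center $K$ of $\E'$ directly from the explicit trilinear equation obtained in the proof of Proposition 1, parametrize it rationally in $u$, and then implicitize to a single quadratic equation for the locus. In trilinear coordinates, the center of a nondegenerate conic with symmetric matrix $Q$ is the pole of the line at infinity $a\alpha + b\beta + c\gamma = 0$, i.e.\ (up to scalar) $K = Q^{-1}(a,b,c)^T$, where $a,b,c$ are the reference-triangle side lengths expressed in the $h$ parameter of Section~\ref{sec:envelope}. Because the entries of $Q$ are polynomial in $u$, $h$, and $\cos\theta$, this yields a rational map $u \mapsto K(u)$; I then hand the three projective coordinates of $K(u)$ to the CAS and eliminate $u$ by resultants, expecting a homogeneous degree-two relation.

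To identify the center of the resulting conic without a second round of elimination, I invoke symmetry. The construction $M \mapsto \E'_{M,\theta}$ is equivariant under the central symmetry $\sigma$ of $\E$: if $M^\star$ denotes the antipode of $M$ on $\E$, then the family of $\theta$-chords at $M^\star$ is the $\sigma$-image of the family at $M$, so $\E'_{M^\star,\theta} = \sigma(\E'_{M,\theta})$ and therefore $K(M^\star) = \sigma(K(M))$. In particular the midpoint of $K(M)$ and $K(M^\star)$ is the center of $\E$ for every $M$, so the locus is invariant under $\sigma$. Once the locus is known to be a conic, this forces it to be centered at the fixed point of $\sigma$, hence concentric with $\E$; ellipticity then follows by inspecting the signature of the implicit quadratic form.

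The main obstacle is algebraic bulk: the inverse $Q^{-1}(a,b,c)^T$ is already a sizable rational expression in $u$, and resultant elimination across its three components inflates the degree substantially. A practical shortcut is to verify concentricity first by checking algebraically that $K(u)+K(u^\star)$ is a constant scalar multiple of the trilinear coordinates of the center of $\E$, where $u\mapsto u^\star$ is the explicit M\"obius involution on the parameter sending $M$ to its antipode; this avoids elimination for the concentricity part entirely and reduces the final CAS task to producing one implicit quadratic equation in the coordinates of $K$ and checking the signs of its coefficients to certify that the locus is a genuine ellipse rather than a hyperbola or degenerate conic.
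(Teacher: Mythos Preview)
Your approach is sound and broadly parallel to the paper's---both read off the center $K(u)$ of $\E'$ from its conic equation and then pass to an implicit quadratic for the locus---but it differs in two respects worth noting. First, for implicitization the paper simply evaluates $K(u)$ at five sample values $u\in\{0,\pm1,\pm2\}$ and interpolates a conic through the resulting points, whereas you propose genuine elimination of $u$ by resultants; both are CAS tasks, but your route is logically tighter, since five-point interpolation by itself only produces \emph{a} conic through five special points and still owes a verification that the generic $K(u)$ satisfies it (the paper is tacitly relying on the fact, visible in its displayed formulas, that $\alpha,\beta,\gamma$ are quadratic in $u$). Second, and more interestingly, for concentricity the paper just computes the center of the interpolated conic from its coefficients and checks it equals $O=1-h:1+h:1+h$, while you argue via the equivariance $\E'_{\sigma(M),\theta}=\sigma(\E'_{M,\theta})$ under the central symmetry $\sigma$ of $\E$. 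That symmetry argument is correct---$\sigma$ is an isometry, hence preserves the $\theta$-angle condition on chords, hence their envelope---and it supplies a conceptual reason for concentricity that the paper's purely computational check does not; it also means you need the CAS only to certify that the locus is a conic and to read its signature. One small caution on your proposed shortcut: the check ``$K(u)+K(u^\star)$ is a constant scalar multiple of $O$'' only makes sense after normalizing the homogeneous trilinears (e.g.\ to exact trilinears or to barycentrics), since addition of projective representatives is otherwise meaningless; the purely geometric version of the equivariance argument already gives you concentricity without this step.
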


\begin{proof}
Trilinear coordinates $\alpha : \beta : \gamma$ of center of ellipse $K$ are retrieved directly by duality and from the equation of $\E'$  :
$$\alpha = (-(h u + h + 2)h - h(u + 2)u + u^2 + u + 1)(3 - h^2) + (hu + u^2 + h + u + 1)(1 - h)w^2 $$
$$\beta = ( (h u + h + 2) h + h(u + 4)u + u^2 + u + 1)(3 - h^2) + (h*u + u^2 + h + u + 1)(1 + h)w^2 $$
$$\gamma = ( (hu + h + 2)h - hu^2       + u^2 + u + 1)(3 - h^2) + (h u + u^2 + h + u + 1)(1 + h)w^2] $$

Plugging five values ($0$, $\pm 1$, $\pm 2$) for $u$, we get five points $K_1, \dots, K_5$. From them equation of conic locus of $K$ follows :   
$$k_1 \alpha^2 + k_2 \beta^2 + k_3 \gamma^2 + k_4 \beta \gamma + k_5 \gamma \alpha + k_6 \alpha \beta = 0$$
with
$$k_2 = k_3 = ((1 + h)(3 - h)w^2 + (3 - h^2)(3 + h)(1 - h))(w^2 + 3 - h^2)$$
$$k_1 = k_2 (1 + h)^2 $$
$$k_4 = -((1 + h)(3 - h)w^2 + 2(3 - h^2)^2)(1 + 2h - h^2)w^2 - (h^4 - 4h^2 + 4h + 3)(3 - h^2)^2 $$
$$k_5 = k_6 = -((1 + h)(3 - h)w^2 + 2(3 - h^2)^2)(1 - h^2)w^2 - (h^4 + 2h^3 - 2h + 3)(3 - h^2)^2 $$

Because $k_1$,$k_2$,$k_3$ have same signs, the conic is an ellipse and its center is $O = 1 - h : 1 + h : 1 + h$, center of ellipse $\E$.  
\end{proof}

\begin{figure}
    \centering
    \includegraphics[width=\textwidth]{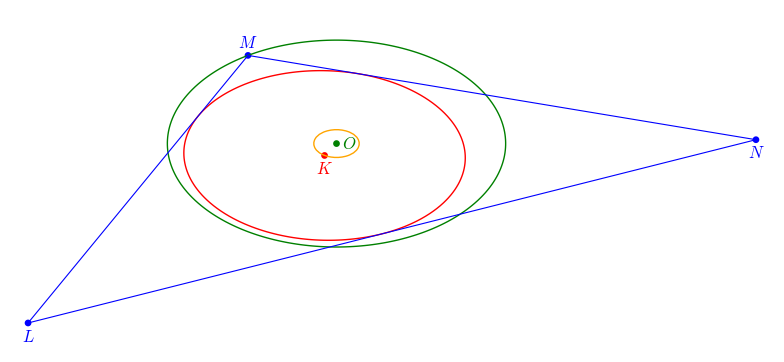}
    \caption{Locus center of envelope}
    \label{fig:locus}
\end{figure}

\begin{proposition}
Squared area of $\E'$ is $k^2 $ times squared area of $\E$ where :
$$k^2 = \frac{\rho^3(\rho+4)^3 \cos^2{\theta}}{((\rho+1)^2 + (2\rho-1)\cos^2{\theta})^3}$$
with $\rho=(1-h^2)/2$.
\end{proposition}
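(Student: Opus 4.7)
The plan is to compute $(\mathrm{Area}\,\E')^2/(\mathrm{Area}\,\E)^2$ directly from the matrix representations of the two conics. The $3\times 3$ symmetric trilinear matrix $S'$ of $\E'$ is already available from the duality computation behind the first proposition, and the matrix $S$ of $\E$ is given by the reference configuration, so the task reduces to symbolic algebra. The nontrivial content is that the parameter $u$ locating $M$ on $\E$ must drop out of the final ratio.

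First I would write $S$ and $S'$ explicitly; their entries are polynomial in $u$, $h$, and $w=\sqrt{(3+h)(1-h)}\cos\theta$. Next, I would pass to Cartesian coordinates via the linear change of basis $T$ that sends trilinears (relative to the isosceles billiard 3-orbit) to Cartesian coordinates aligned with the axes of $\E$; the Cartesian matrices of the two conics are then $M=T^{\top}ST$ and $M'=T^{\top}S'T$, with $2\times 2$ quadratic parts $A$ and $A'$. Using the standard formula $\mathrm{Area}=\pi\,|\det M|/|\det A|^{3/2}$ for a central conic, the squared area ratio becomes
$$k^{2}\;=\;\frac{(\det M')^{2}\,(\det A)^{3}}{(\det M)^{2}\,(\det A')^{3}}.$$
Finally, I would feed this expression to CAS and verify two things: every $u$-dependence cancels identically, and the resulting rational function of $h$ and $w$ reduces, under $\rho=(1-h^{2})/2$ together with $w^{2}=(3+h)(1-h)\cos^{2}\theta$, to the claimed closed form.

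The principal obstacle is algebraic rather than conceptual. Because $\det M'$ is a polynomial of fairly high degree in $u$, the cancellation that makes $k^{2}$ a function of $\theta$ and $h$ alone is not transparent term by term; it is an identity that has to be certified mechanically by CAS. Geometrically, this cancellation is precisely the content of the proposition: as $M$ sweeps $\E$, the Fr\'egier ellipse $\E'$ changes shape --- its axes are not individually constant --- yet its area is, and this is arguably the most striking of the invariances established in the paper. Once the $u$-free form is in hand, matching it against the compact expression in $\rho$ is a routine substitution.
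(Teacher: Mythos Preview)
Your approach is correct, but it differs from the paper's. Both strategies start from the trilinear equation of $\E'$ obtained by duality in the first proposition and both delegate the heavy simplification (the disappearance of $u$) to CAS, so neither is more rigorous than the other; the distinction is in how the area is extracted. You pass from trilinears to Cartesians via an explicit change of basis $T$ and then invoke the determinant formula $\mathrm{Area}=\pi\,|\det M|/(\det A)^{3/2}$ for a central conic. The paper instead keeps the computation projective: it takes the two tangents from $M$ to $\E'$ (already computed for Proposition~2) together with one chord $NL$, obtaining a triangle that circumscribes $\E'$, and then reads off the area from the standard \emph{inellipse} formula of \cite{mw_inellipse} applied to that triangle. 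Your route is arguably cleaner—no auxiliary triangle, no appeal to an external inellipse identity—at the cost of having to set up the trilinear-to-Cartesian transition matrix $T$ explicitly. The paper's route avoids choosing Cartesian axes altogether and recycles the tangent-line data already in hand, which keeps everything inside the trilinear framework used throughout. Either way, the substantive fact—that $u$ cancels—is certified symbolically, and the final reduction to the $\rho$-form is the same routine substitution you describe.
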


\begin{proof}
Extend $LN$ segment shown on \cref{fig:fregier_ellipse} and intersect with the two $E'$-tangent lines through $M$ to get a triangle $MNL$ inscribing the ellipse $\E'$. The area is computed as an $MNL$ inellipse area by formula given in \cite{mw_inellipse}.

The area of $\E'$ doesn't depend from $u$ (parameter defining $M$) hence the area is constant when $M$ is moving around the ellipse.

\end{proof}

When the outer ellipse $\E$ is a circle, $h=0$ and $\rho=1/2$ giving $k^2 = \cos^2{\theta}$.

\section{Special theta values}
\label{sec:special}

One special angle value is $\theta=\pi/2$ where ellipse $\E'$is reduced to $K$ the M-Fregier point.

Another special angle value is  $\theta=\pi/4$, because the ellipse $\E'$ is ``viewed`` from $M$ point on $\E$ ellipse with a $\pi/2$ angle, $M$ is on the $\E'$  orthoptic circle of the caustic. In that case, $\E'$ semi-axis lengths ($a_1,b_1$) can be easily deduced from the formula for $\E'$ squared area ($\pi^2a_1^2b_1^2)$ and $KM^2 = a_1^2+b_1^2$ .
For generic angle $\theta$, use ``roulette`` construction of orthoptic circle point $Q$ given in \cite{mathstack2018_orthoptic} : set any tangency point, and draw the tangent and its reflection with respect to $K$ defining a plane band with width $D(u,\theta)$, then draw point $Q$ on the normal at half distance $D/2$ to tangency point outside caustic.

Finally we have $\theta$ special values, $\pi/3$ and $2\pi/3$, with a family of Poncelet configurations.

\begin{proposition}
For special values, $\theta=\pi/3$ and $\theta=2\pi/3$, the ellipse $\E'$ is inscribed in the $MNL$ triangle.
\end{proposition}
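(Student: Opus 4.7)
The plan is to exhibit, for each $M \in \E$, one triangle $MNL$ in the $\theta$-family whose sides $MN$ and $ML$ coincide with the two tangent lines from $M$ to $\E'$; the envelope property will then force the third side $NL$ to be tangent as well, so that $\E'$ is inscribed in $MNL$.

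First I would revisit the preceding proposition. It asserts that the two tangent lines from $M$ to $\E'$ meet at $M$ with $\cos^2$ of their angle equal to $\cos^2(\pi - 2\theta) = \cos^2(2\theta)$, so the pair of supplementary angles they determine is $\{|\pi - 2\theta|,\ \pi - |\pi - 2\theta|\}$. For $\theta = \pi/3$ this pair is $\{\pi/3, 2\pi/3\}$, and for $\theta = 2\pi/3$ it is again $\{\pi/3, 2\pi/3\}$. In both cases the value $\theta$ itself appears as one of the two angles made by the tangent lines from $M$ to $\E'$.

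Next I would construct the triangle. Let $t_1, t_2$ denote the two tangent lines from $M$ to $\E'$, and set $N := (t_1 \cap \E) \setminus \{M\}$ and $L := (t_2 \cap \E) \setminus \{M\}$, which are well-defined for generic $M$. Then $MN \subset t_1$ and $ML \subset t_2$ are tangent to $\E'$ by construction, and by the angle identification of the previous step the apex angle $\angle NML$ equals $\theta$ (with the appropriate labelling of rays). So $MNL$ is a triangle inscribed in $\E$ with apex angle $\theta$ at $M$, i.e.\ a member of the $\theta$-family from Proposition 1. Invoking that proposition, the third side $NL$ is tangent to $\E'$, and hence $\E'$ is inscribed in $MNL$.

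The main obstacle I anticipate is the obtuse case $\theta = 2\pi/3$: one must verify that the interior apex angle $\angle NML$ of the constructed triangle is the obtuse value $2\pi/3$ and not its acute supplement $\pi/3$. Equivalently, one must check that $N$ and $L$ are positioned so that the caustic $\E'$ lies in the angular region bounded by $t_1, t_2$ that also contains the triangle. I would settle this either via the explicit trilinear formulas for the tangent lines recorded in the proof of Proposition 2, or by a continuity argument deforming $\theta$ through the interval $(\pi/3, 2\pi/3)$ while tracking which branch of the tangent cone from $M$ carries the chord $NL$.
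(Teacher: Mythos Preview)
Your construction coincides with the paper's: both take the two tangents from $M$ to $\E'$ and let them meet $\E$ again at two points (the paper calls them $T_1,T_2$, you call them $N,L$), and then ask whether the third side $T_1T_2=NL$ is tangent to $\E'$. The difference is in how that tangency is certified. The paper simply computes, via its CAS machinery, the condition on $\theta$ for the line $T_1T_2$ to be tangent to $\E'$ and finds it equivalent to $\cos^2\theta=1/4$. You instead combine Proposition~2 (the viewing angle of $\E'$ from $M$ equals $\pi-2\theta$) with the envelope property of Proposition~1 (every chord in the $\theta$-family is tangent to $\E'$). For $\theta=\pi/3$ this is a clean, more conceptual alternative: the apex angle of $M N L$ is $\pi-2\pi/3=\pi/3=\theta$, so $MNL$ belongs to the $\theta$-family and the envelope forces $NL$ to be tangent.

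The gap is in the case $\theta=2\pi/3$, and your proposed remedies would not close it in the direction you hope. The interior angle $\angle NML$ of the constructed triangle is precisely the viewing angle of $\E'$ from $M$, namely $|\pi-2\theta|$, \emph{not} its supplement: since $\E'$ lies inside $\E$, the rays from $M$ through $N$ and $L$ pass through the tangent points on $\E'$ before reaching $\E$, so $\E'$ sits in the wedge that determines the triangle's angle at $M$. For $\theta=2\pi/3$ this gives apex angle $\pi/3$, not $2\pi/3$; hence $MNL$ is \emph{not} in the $2\pi/3$-family, and your appeal to the envelope for that family does not apply. (Already in the circle case $\E'$ has radius $R/2$ and the circumscribing triangle is equilateral with apex angle $\pi/3$.) The trilinear formulas or a continuity argument will only confirm this, not reverse it. The correct patch is to note that $\E'_{\theta}=\E'_{\pi-\theta}$, since the envelope in Proposition~1 is determined through $\cos^2\theta$; then the $2\pi/3$ case reduces to the $\pi/3$ case you have already handled.
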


\begin{proof}
Intersect tangents from $M$ to $\E'$ with ellipse $\E$ to get two points $T_1$, $T_2$. Line $T_1T_2$ is tangent to $\E'$ if and only if $\cos^2{\theta}= 1/4$, happening for $\theta=\pi/3$ and $\theta=2\pi/3$.
If $\theta=\pi/3$ circumcircle of $MT_1T_2$ intersects ellipse $\E$ a fourth time at point $M'$ viewing $T_1T_2$ and $\E'$ at $2\pi/3$ angle.

\end{proof}

For these special values, if $\E'$ position is set, we have a Poncelet ellipse-ellipse configuration where $MNL$ is a 3-orbit. Moreover : 

$$k^2 = \frac{\rho^3(\rho+4)^3 1/4}{((\rho+1)^2 + (2\rho-1)1/4)^3}= \frac{16\rho^3(\rho+4)^3}{(4\rho^2+10\rho+3)^3} $$

\section{Fregier circles}
\label{sec:circles}

The Poncelet circle-ellipse(orange color) concentric configuration is an affine transformation of the Poncelet ellipse-ellipse (green and purple colors on \cref{fig:circles}) concentric configuration by scaling distances on the major axis.

\begin{figure}
    \centering
    \includegraphics[width=\textwidth]{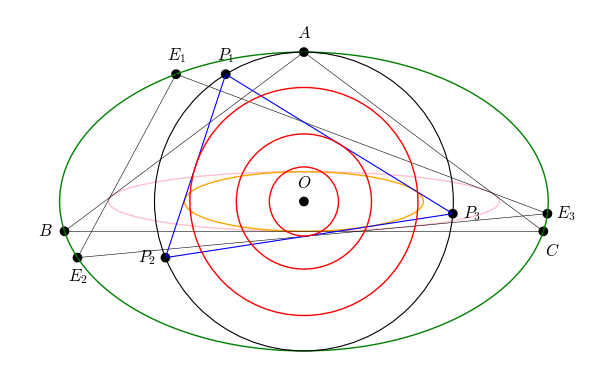}
    \caption{Fregier circles for 3-orbit}
    \label{fig:circles}
\end{figure}

Given a cyclic n-orbit $P_1 \dots P_i \dots P_n$ there are $n$ Frégier concentric circles defined by couples $(P_i,\theta_i)$ where $\theta_i$ is internal angle $P_{i-1}P_iP_{i+1}$.

\begin{proposition}
When $n = 3$, the sum of squared areas of Fregier circles for an $n$- orbit is a Poncelet invariant : $\pi b^2 (1 - \rho/2)$ in the billiard Poncelet configuration with specular reflection on ellipse border.
\end{proposition}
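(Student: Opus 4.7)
The plan is to verify the claimed Poncelet invariance by a direct symbolic computation in the trilinear framework of \cref{sec:envelope}, and then to evaluate the constant at the reference isosceles $3$-orbit $ABC$ to read off its value.

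First I would parameterize the family of billiard $3$-orbits by letting one vertex be $M=u+1:u(u+1):-u$, with the other two vertices $N$ and $L$ recovered as the second intersections of $\E$ with the two tangents from $M$ to the $3$-orbit confocal caustic (whose equation in the $ABC$ trilinear frame is determined by Cayley's criterion). This gives closed-form trilinear expressions for $N(u)$ and $L(u)$, and hence, by the cosines law, for the three vertex-angle cosines $\cos\theta_i(u)$, $i=1,2,3$.

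Next I would substitute these $\cos\theta_i(u)$ into the Fr\'egier area formula of the proposition in \cref{sec:locus} to express each area $\mathrm{area}(\E'_i)$ as an explicit rational function of $u$ and $h$, form the sum, and check via CAS that its $u$-derivative is identically zero, which is exactly the Poncelet-invariance statement. To extract the explicit constant I would then specialize $u$ so that $M=A$ (so that the orbit becomes the isosceles $ABC$); the three vertex angles are then explicit functions of $h$ alone, and simplification using $\rho=(1-h^2)/2$ together with the defining ellipse relation $(a/b)^2=(1+h)(3-h)/((1-h)(3+h))$ produces the closed value $\pi b^2(1-\rho/2)$.

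The main obstacle is the $u$-independence step: the area formula has a cubic denominator and the $\cos^2\theta_i(u)$ are already rational functions of moderate degree in $u$ and $h$, so the combined sum collapses only after substantial algebraic manipulation and is realistically a CAS computation rather than a by-hand one. A cleaner, more conceptual alternative would be to rewrite $\sum \mathrm{area}(\E'_i)$ as a polynomial in a few already-known Poncelet invariants of the elliptic billiard --- the elementary symmetric functions $\sigma_1=\sum\cos\theta_i$ and $\sigma_3=\prod\cos\theta_i$ are the natural candidates, thanks to the classical triangle identity $\sum\cos^2\theta_i+2\prod\cos\theta_i=1$ --- but absent such a shortcut the proof commits to the same brute-force computational route already used in the preceding propositions, and the conjectural extension to $n>3$ presumably hinges on finding exactly such a conceptual reformulation.
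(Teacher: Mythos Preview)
Your proposal misreads the setting of the proposition. \Cref{sec:circles} concerns Fr\'egier \emph{circles}: the outer conic here is a circle (obtained from the billiard ellipse by scaling the major axis by $b/a$), and the $P_i$ are the images of the billiard vertices under that affine map. The angles $\theta_i$ are the interior angles of the \emph{cyclic} triangle $P_1P_2P_3$, not of the original billiard triangle $MNL$ inscribed in $\E$; since the affine map is not conformal, these are different. Consequently your plan to compute $\cos\theta_i$ by the cosines law in the trilinear ellipse frame and feed them into the general $k^2$ formula of \cref{sec:locus} targets the wrong quantity. Moreover, once one is in the circle case one has $h=0$ in that area formula, so $k^2=\cos^2\theta$ exactly and the cubic denominator you worry about never occurs: the sum of Fr\'egier circle areas is simply $\pi b^2\sum_i\cos^2\theta_i$.

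The paper's actual argument exploits precisely this circularity. By the inscribed angle theorem, $2\theta_i$ equals the central angle $P_{i-1}OP_{i+1}$, so
\[
\sum_i\cos^2\theta_i \;=\; \tfrac12\sum_i\bigl(1+\cos(P_{i-1}OP_{i+1})\bigr)\;=\; n-\frac{1}{4b^2}\sum_i|P_{i-1}P_{i+1}|^2,
\]
which for $n=3$ reduces the problem to showing that the sum of squared side lengths of the cyclic triangle is a Poncelet invariant. The paper then passes to cartesian coordinates via the explicit affine scaling $E(u)\mapsto P(u)$, writes the tangency condition with the caustic as a quadratic in the parameter (whose roots are $u_2,u_3$ with known sum and product in terms of $u_1$), and lets a CAS verify $\sum|P_iP_{i+1}|^2=2b^2(\rho+4)$, yielding $\sum\cos^2\theta_i=1-\rho/2$ and hence the stated value. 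Your suggested shortcut via $\sum\cos^2\theta_i+2\prod\cos\theta_i=1$ is in the right spirit but is not the reduction used; the inscribed angle identity is the key step you are missing, and it is also what makes the $n>3$ conjecture about sums of squared short diagonals natural.
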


\begin{proof}
Because squared area of the $(P_i,\theta_i)$ Frégier circle is $\cos^2{\theta_i}$ times squared area of outer circle, it suffices to prove that, the sum of squared cosines is the same for any n-orbit. 
Following the idea given in \cite{akopyan2020-invariants} we write the sum as :
$$\sum{\cos^2{\theta_i}} = 1/2\sum{(1 + \cos{(P_{i-1}OP_{i+1})})} = n - 1/(4b^2)\sum{(P_{i-1}P_{i+1})^2} $$ from the circle angle theorem applied to chord $P_{i-1}P_{i+1}$. Equivalently we need to prove that the sum of squared lengths for short diagonals is constant.

When $n=3$ the short diagonals are simply the $P_1P_2P_3$ triangle edges.

Be ABC  the reference triangle for trilinear coordinates.
Cartesian plane axes are defined by major and minor axis, with origin at O.
Projection of $E = E(u)$ on major axis and scaling with ratio $b/a$ along it, is giving cartesian coordinates of $P$ as :

$$x(u) = \frac{ -u(u + 2) s \sqrt{1 - h^2} }{(u^2 + (1 + h)(u + 1))\sqrt{9 - h^2} }  $$
$$y(u) = \frac{ ((h - 1)u^2 + 2(1 + h)u + 2(1 + h))s \sqrt{1 - h}}{ (((u^2 + (1 + h)(u + 1))(3 - h)\sqrt{3 + h} }$$

Five tangent lines from vertices $A,B,C$ to caustic ($ABC$ Mandart inellipse) are used to get its conic functions :
$$ (h + 1)^4 : (1 - h)^2 : (1 - h)^2 : -2(1 - h)^2 : -2(1 + h)^2(1 - h) : -2(1 + h)^2(1 - h) $$
and by duality, tangent lines coordinates $\alpha : \beta : \gamma$  are verifying equation 
$$(1 + h)^2\beta\gamma + (1 - h)\gamma\alpha + (1 - h)\alpha\beta = 0 $$

Be $M(X)$ a generic point different from $E_1(u)$ with $u=u_1$, then line $E_1M =  u X : 1 : (u + 1)(X + 1) $ belongs to caustic tangents if its trilinear coordinates are satisfying the caustic dual equation.
So $X$ is satisfying binomial equation :  $$X^2 - \mu X + \psi = 0 $$
with
$$\mu = \frac{(1 - h)u^2 + (3 + h^2)u + (1 + h)^2 }{(h - 1)(u + 1)u}$$
$$\psi = \frac{-(h + 1)^2}{(h - 1)u)} $$
which has roots $u_2,u_3$ ($u_2 + u_3 = \mu$; $u_2u_3 = \psi$) defining the two tangents $E_1E_2$ and $E_1E_3$ from $E$ to the caustic.

It results from CAS computation that the sum of the 3-orbit  squared edges lengths is $$\sum_{cyc}{(x(u_i+1) - x(u_i))^2 + (y(u_i+1) - y(u_i))^2 } = 2b^2(\rho + 4)$$
and the sum of areas of Fregier circles has constant value
$\pi b^2 (1 - \rho/2)$ .
\end{proof}

\begin{conjecture}
Previous proposition 
is valid for every integer $n$ greater than 3.
\end{conjecture}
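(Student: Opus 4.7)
The plan is to reduce the conjecture, exactly as in the $n=3$ case, to showing that the sum of squared short diagonals
\[
S_n \;:=\; \sum_{i=1}^n |P_{i-1}P_{i+1}|^2
\]
is a Poncelet invariant of the orbit. The inscribed angle theorem applied to the chord $P_{i-1}P_{i+1}$ of the outer circle of radius $b$ gives $|P_{i-1}P_{i+1}|^2 = 4b^2\sin^2\theta_i$, so $\cos^2\theta_i = 1 - |P_{i-1}P_{i+1}|^2/(4b^2)$ and
\[
\sum_{i=1}^n \cos^2\theta_i \;=\; n \;-\; \frac{S_n}{4b^2},
\]
which is precisely the reduction already carried out in the paper for $n=3$. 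Any proof of invariance of $S_n$ therefore yields an explicit invariant value of the sum of squared Fregier-circle areas as a function of $n$ and of the pair of conics, with the $n=3$ value $\pi b^2(1-\rho/2)$ recovered as a special case.

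To establish invariance of $S_n$ I would pass to the Cayley/elliptic parametrization $t$ of the outer circle in which the Poncelet return map becomes the translation $t\mapsto t+\delta$, with $n\delta$ lying in the period lattice of length $T$. In this coordinate the squared short diagonal depends only on $t_{i-1}$ and on the fixed shift $2\delta$, so there is a single universal real-analytic, $T$-periodic function $F$ with
\[
S_n(t_0) \;=\; \sum_{j=0}^{n-1} F(t_0 + j\delta).
\]
Writing $\delta = kT/n$ with $\gcd(k,n)=1$, the progression $\{t_0+j\delta\}$ runs through $\{t_0+mT/n\}$, and expanding $F$ in its Fourier series one sees that only those modes whose index is divisible by $n$ survive the summation. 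Invariance of $S_n$ in $t_0$ is therefore equivalent to the vanishing of the Fourier coefficients $\widehat{F}(n\ell)$ for every nonzero integer $\ell$.

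The main obstacle is exactly this vanishing. The function $F$ is the pullback of squared Euclidean distance on the outer circle by the correspondence $t\mapsto(t,t+2\delta)$, and it extends to a meromorphic function on the elliptic curve uniformizing the Poncelet pencil. Following the method of Akopyan--Bialy--Tabachnikov in \cite{akopyan2020-invariants}, I would differentiate $S_n$ in $t_0$, read the derivative as a meromorphic function on that elliptic curve, bound its pole divisor from above, and then conclude via a Riemann--Roch / dimension count that the closure relation $n\delta\equiv 0$ forces it to vanish identically. The hardest step will be controlling the pole structure of $F$ at the two branch points of the short-diagonal involution $t\mapsto t+2\delta$ and verifying that the relevant residues cancel; the concentricity of the circle--ellipse configuration should simplify this bookkeeping but not trivialize it. Once constancy of $S_n$ in $t_0$ is established, the invariant can be evaluated at a symmetric orbit (for instance one having a vertex on the minor axis) to obtain a closed-form generalization of $\pi b^2(1-\rho/2)$ for each $n$.
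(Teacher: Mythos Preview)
The paper does not prove this statement: it is recorded as an open conjecture, with only the $n=3$ case established (and that by a direct CAS computation of $\sum |P_iP_{i+1}|^2$ after the same inscribed--angle reduction you reproduce). So there is no proof in the paper to compare against; your proposal is an attempt at something the paper explicitly leaves open.

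Your reduction to the invariance of $S_n=\sum_i|P_{i-1}P_{i+1}|^2$ is correct and matches the paper's reduction verbatim. The elliptic/Fourier reformulation is also sound: with $F(t)=|P(t)-P(t+2\delta)|^2$ in the uniformizing coordinate, constancy of $S_n$ is equivalent to $\widehat F(n\ell)=0$ for all $\ell\neq 0$.

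What you have written, however, is a plan rather than a proof. The decisive step---actually showing that $\tfrac{d}{dt_0}\sum_j F(t_0+j\delta)\equiv 0$---is described but not executed, and the mechanism you invoke is not quite right as stated. Knowing that an elliptic function lies in some finite--dimensional Riemann--Roch space does not make it vanish, and the closure relation $n\delta\equiv 0$ by itself imposes no linear constraint on that space; it only tells you which $n$ are admissible. In arguments of the type you cite, the real work is to verify that the principal parts of the individual summands $F'(t_0+j\delta)$ \emph{cancel} across the sum, so that the derivative is pole--free on the torus, hence constant, hence zero. That is a concrete local computation at the poles of $P$ (the branch points of the Poncelet correspondence), and you flag it as ``the hardest step'' without carrying it out. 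Note also that $F$ itself depends on $\delta$, and therefore on $n$, so the pole/residue bookkeeping must be done uniformly in $n$. Until that computation is actually performed, the argument does not close and the statement remains, as in the paper, a conjecture.
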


\textbf{Dynamical Systems remark} :  the Fregier circles are an example of generalized Poncelet configuration, where the n-concentric inner circles are in the same pencil defined by two of them. For $n=3$, if $R$ is radius of outer circle and $r_1,r_2,r_3$ are radi of inner concentric circles, the proposition has a corollary for a 3-orbit $P_1P_2P_3$ existence condition (equivalent to $\rho$ defined between $0$ and $1/2$) :
$$ 3/4 \leq  (r_1^2 + r_2^2 + r_3^2)/R^2 \leq 1 $$.

\section{Videos and Symbols}
\label{sec:videos}
Animations illustrating some phenomena herein are listed on Table~\ref{tab:playlist}.

\begin{table}
\small
\begin{tabular}{|c|l|l|}
\hline
id & Title & \textbf{youtu.be/<.>}\\
\hline
01 & {Frégier phenomena i: Area-invariant envelope of chords.} &
\href{https://youtu.be/
UCCG5AT8dh8}{\texttt{UCCG5AT8dh8}}\\
02 & {Frégier phenomena iii: Circular envelopes and a new invariant } &
\href{https://youtu.be/AzNXeBU2NTI}{\texttt{AzNXeBU2NTI}}\\

\hline
\end{tabular}
\caption{Videos of some focus-inversive phenomena. The last column is clickable and provides the YouTube code.}
\label{tab:playlist}
\end{table}

\cref{tab:symbols} provides a quick-reference to the symbols used in this article.

\begin{table}
\begin{tabular}{|c|l|}
\hline
symbol & meaning \\
\hline
$\E,\E'$ & outer and inner ellipses \\
$O$ & center of $\E,\E'$ \\
$a,b$ & outer ellipse semi-axes' lengths \\
$\rho$ & ratio of inradius to circumradius $r/R$ \\
$x:y$ & cartesian coordinates \\
$\alpha:\beta:\gamma$ & trilinear coordinates \\
$u,u_i$ & parameter of point on $\E$ \\
$h:s$ & elliptic billiard parameters \\
$E_i$ & point on $\E$ \\
$P_i$ & point on outer circle \\
$r_i$ & Frégier circle radius \\
\hline
\end{tabular}
\caption{Symbols used in the article.}
\label{tab:symbols}
\end{table}

\section{Acknowledgments}
\label{sec:ack}
\noindent We would like to thank D. Reznik for observations and his youtube videos about it. 

\appendix

\bibliographystyle{maa}
\bibliography{999_refs,999_refs_rgk}

\begin{thebibliography}{1}
\expandafter\ifx\csname urlstyle\endcsname\relax
 \providecommand{\url}[1]{doi:\discretionary{}{}{}#1}\else
 \providecommand{\url}{doi:\discretionary{}{}{}\begingroup
  \urlstyle{rm}\Url}\fi

\bibitem{akopyan2020-invariants}
Akopyan, A., Schwartz, R., Tabachnikov, S. (2020).
\newblock Billiards in ellipses revisited.
\newblock \emph{Eur. J. Math.}
\newblock \url{doi:10.1007/s40879-020-00426-9}.

\bibitem{mathstack2018_orthoptic}
D'Aurizio, J. (2018).
\newblock Answer to question about circle tangent to an ellipse.
\newblock
  \url{math.stackexchange.com/questions/2359568/a-circle-tangent-to-an-ellipse}.

\bibitem{reznik2021_fregierI}
Reznik, D. (2021).
\newblock Frégier phenomena i: Area-invariant envelope of chords.
\newblock \url{youtu.be/UCCG5AT8dh8}.

\bibitem{reznik2021_fregierIII}
Reznik, D. (2021).
\newblock Frégier phenomena iii: Circular envelopes and a new invariant for a
  poncelet family.
\newblock \url{youtu.be/AzNXeBU2NTI}.

\bibitem{mw_inellipse}
Weisstein, E. (2022).
\newblock Inellipse.
\newblock \url{mathworld.wolfram.com/Inellipse.html}.

\end{thebibliography}

\end{document}